\newtheorem{theorem}{Theorem}[section]
\newtheorem{lemma}[theorem]{Lemma}
\newtheorem{conjecture}[theorem]{Conjecture}
\newtheorem*{theorem*}{Theorem}
\theoremstyle{definition}
\newtheorem*{definition}{Definition}
\theoremstyle{remark}
\newtheorem*{remark*}{Remark}
\newtheorem{construction}[theorem]{Construction}
\numberwithin{equation}{section}
\DeclareMathOperator{\Cl}{Cl}
\DeclareMathOperator{\Int}{Int}
\DeclareMathOperator{\diam}{diam}
\DeclareMathOperator{\im}{im}
\DeclareMathOperator{\dist}{dist}
\begin{document}

\bibliographystyle{abbrv}
\baselineskip=17pt

\title{Compactifications of unstable N\"obeling~spaces}

\author[A.~Nagórko]{A.~Nagórko}
\address[A.~Nagórko]{Faculty of Mathematics, Informatics, and Mechanics, University of Warsaw, Banacha 2, 02-097 Warszawa, Poland}
\email{amn@mimuw.edu.pl}
\thanks{This research was supported by the NCN (Narodowe Centrum Nauki) grant no. 2011/01/D/ST1/04144.}

\date{}

\begin{abstract}
We construct and embedding of a N\"obeling space $N^n_{n-2}$ of codimension $2$ into a Menger space $M^n_{n-2}$ of codimension $2$. This solves an open problem stated by R.~Engelking in 1978~\cite{engelking1978} in codimension~$2$.
\end{abstract}
\maketitle

\section{Introduction}

  A N\"obeling space $N^n_m$ is a subset of $\mathbb{R}^n$ consisting of points with at most~$m$ rational coordinates. If $n \geq 2m + 1$, then $N^n_m$ is homeomorphic to $\nu^n = N^{2n+1}_n$, the universal
$n$-dimensional N\"obeling space. The space~$\nu^n$ is considered to be an $n$-dimensional analogue
of the Hilbert space~$\ell^2$. In particular, it is characterized by the following theorem~\cite{ageev2007a, ageev2007b, ageev2007c, levin2009, nagorkophd, nagorko2013}, which is in direct
analogy to the characterization of the Hilbert space given by Toruńczyk~\cite{torunczyk1981}.
\begin{theorem}
An $n$-dimensional Polish space is a N\"obeling manifold if and only if it is an absolute neighborhood
  extensor in dimension~$n$ that is strongly universal in dimension~$n$.
\end{theorem}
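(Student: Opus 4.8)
The plan is to prove the two implications separately, following the scheme that Toruńczyk used for $\ell^2$-manifolds but carried out entirely ``in dimension $n$''. For the easy implication I would verify that a Nöbeling manifold satisfies the two conditions, and since being an ANE(n) and being strongly universal in dimension $n$ are local properties (stable under passage to open subsets and under gluing), it suffices to check them for the model $\nu^n = N^{2n+1}_n \subseteq \mathbb{R}^{2n+1}$. That $\nu^n$ is an absolute neighborhood extensor in dimension $n$ follows from its being $LC^{n-1}$ together with a filling argument: any map of an at-most-$n$-dimensional compactum into a small ball of $\mathbb{R}^{2n+1}$ can be pushed, by general position, off the set of points with more than $n$ rational coordinates, hence into $\nu^n$. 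Strong universality in dimension $n$ is again a general-position statement: by the Nöbeling--Pontryagin embedding theorem an $n$-dimensional Polish space embeds in $\mathbb{R}^{2n+1}$, a generic perturbation of any map makes it an embedding whose image misses the excess-rational-coordinate set, and the relative version (keeping the map fixed on a closed set where it is already a $Z$-embedding) is obtained by localizing the same perturbation away from that set.

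The substance is the converse. Let $X$ be $n$-dimensional, Polish, ANE(n) and strongly universal in dimension $n$. Since the conclusion is local, recognizing $X$ as a manifold reduces on charts to recognizing the model $\nu^n$ up to homeomorphism, and I would produce such a homeomorphism by a back-and-forth construction of two sequences of maps $f_i : X \to \nu^n$ and $g_i : \nu^n \to X$ that are Cauchy for complete metrics and whose compositions converge to the identities, so that the limits $f = \lim_i f_i$ and $g = \lim_i g_i$ are mutually inverse homeomorphisms. The ANE(n) condition is what lets me extend and improve maps at each stage, while strong universality is what lets me replace each approximating map by a $Z$-embedding arbitrarily close to it. Two preliminary reductions make this precise: first, a theory of $Z$-sets in $X$, showing that $Z$-sets are negligible and that the hypotheses pass to their complements; second, a \emph{discrete approximation property} extracted from strong universality, allowing countable discrete families of maps to be made simultaneously disjoint.

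The engine that forces the limits to be homeomorphisms is a pair of finite-dimensional analogues of the Hilbert-space tools. The first is a \emph{near-homeomorphism criterion}: a proper fine $n$-homotopy equivalence between spaces satisfying the hypotheses is a uniform limit of homeomorphisms. The second, and the real crux, is a \emph{$Z$-set unknotting theorem}: any two $Z$-embeddings of an $n$-dimensional Polish space into $X$ that are sufficiently close and $n$-homotopic are joined by an ambient isotopy of $X$ that is small with respect to a prescribed open cover. With unknotting available, each successive $f_{i+1}$ (resp. $g_{i+1}$) is built from $f_i$ (resp. $g_i$) by first extending via ANE(n), then embedding via strong universality, then correcting the composite back toward the identity by a small unknotting isotopy; careful control of the cover meshes makes both sequences Cauchy and their compositions converge to the identities. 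Running the argument with the roles of $X$ and $\nu^n$ interchanged shows that any two models of the axioms are homeomorphic, which identifies $X$ with a Nöbeling manifold and yields the topological uniqueness implicit in the statement.

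The main obstacle is the $Z$-set unknotting theorem in this finite-dimensional setting. In the Hilbert-space case one unknots $Z$-embeddings using infinite-dimensional convexity --- linear homotopies and coordinate shifts give room to separate images --- but in $\nu^n$ there is no such room, and every construction must keep all relevant images inside an $n$-dimensional space. Producing the controlled isotopy therefore requires a genuinely finite-dimensional general-position apparatus: one must build the unknotting inductively over a sequence of regular covers (equivalently, over an inverse system of nerves approximating $X$), using ANE(n) to extend the partial isotopies across successive refinements and using strong universality to keep them embeddings in general position at every stage. Organizing this induction so that the meshes shrink geometrically and the isotopy genuinely converges --- rather than producing a mere homotopy --- is where essentially all the difficulty lies, and it is the part that distinguishes the Nöbeling characterization from its classical infinite-dimensional prototype.
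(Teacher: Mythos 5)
The paper does not prove this theorem at all: it is quoted as known background, with the proof attributed to the cited works of Ageev, Levin, and Nagórko (the characterization occupies several long papers and a thesis). So there is nothing in the paper to compare your argument against line by line; what I can assess is whether your outline would, if fleshed out, constitute a proof.

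Your sketch accurately reproduces the strategy of the actual literature: the easy direction by general position in $\mathbb{R}^{2n+1}$, and the converse by a Toruńczyk-style back-and-forth producing mutually inverse limits, powered by a theory of $Z$-sets, a discrete approximation property, a near-homeomorphism criterion for fine $n$-homotopy equivalences, and a controlled $Z$-set unknotting theorem. But as written it is a roadmap, not a proof. The two engines you name --- the near-homeomorphism criterion and $Z$-set unknotting for $Z$-embeddings into an $n$-dimensional ANE($n$) --- are themselves the main theorems of that literature, each requiring its own elaborate machinery (Ageev's or Nagórko's theory of regular covers and nerve towers, carrier-based extension arguments, and a careful bounded-distortion bookkeeping to make the isotopies converge). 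You correctly identify that this is ``where essentially all the difficulty lies,'' but identifying the obstacle is not the same as overcoming it: nothing in your proposal indicates how the inductive construction over refining covers is actually organized, why the partial isotopies can be extended across refinements without destroying the general position already achieved, or why the limit is an isotopy rather than a homotopy. Until those steps are supplied, the converse implication --- the entire content of the theorem --- remains unproven. As a statement of intent your plan is sound; as a proof it has a gap exactly at its announced crux.
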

For $n < 2m+1$ we call $N^n_m$ an \emph{unstable} N\"obeling space. A characterization of unstable N\"obeling spaces have been long sought~\cite{chigogidze1996}. The problem was stated again in a recent work of Gabai~\cite{gabai2009}, where it was conjectured that boundaries at infinity of certain mapping class groups are homeomorphic to unstable N\"obeling spaces.
The problem of characterizating unstable N\"obeling spaces is open.

In $1929$ Menger proposed an axiomatic characterization of covering dimension $\dim$~\cite{menger1929}.
In~\cite{hurewiczwallman1941} Hurewicz and Wallman described a problem whether these axioms characterize $\dim$ as the most important open problem in dimension theory.
Today, it is still not known whether $\dim$ satisfies these axioms.
By a difficult theorem of \v{S}tan'ko~  \cite{stanko1971}, the problem is now reduced to proving the following conjecture.  
\begin{conjecture}\label{compactification}
  Every $m$-dimensional subset of $\mathbb{R}^n$ has an $m$-dimensional compactification that can be embedded into $\mathbb{R}^n$.
\end{conjecture}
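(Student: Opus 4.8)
The plan is to reduce the conjecture to a single embedding problem between two universal objects and then to produce the required compactification by passing to a closure. Fix an $m$-dimensional subset $X \subseteq \mathbb{R}^n$. Since the N\"obeling space $N^n_m$ is universal among $m$-dimensional subspaces of $\mathbb{R}^n$, I would first embed $X$ into $N^n_m$; it then suffices to exhibit a compact $m$-dimensional set $M^n_m \subseteq \mathbb{R}^n$ together with a dense embedding $N^n_m \hookrightarrow M^n_m$. Indeed, let $cX$ be the closure in $M^n_m$ of the image of $X$. Then $cX$ is compact and $cX \subseteq M^n_m \subseteq \mathbb{R}^n$; by monotonicity of covering dimension $\dim cX \le \dim M^n_m = m$, while $\dim cX \ge \dim X = m$ since $cX$ contains a homeomorphic copy of $X$. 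Hence $\dim cX = m$ and $cX$ is the desired compactification. In particular the whole conjecture is reduced to its instance $X = N^n_m$, which is precisely the embedding $N^n_m \hookrightarrow M^n_m$.

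To build $M^n_m$ and the embedding I would realize $M^n_m$ as an inverse limit of $m$-dimensional polyhedra $P_0 \leftarrow P_1 \leftarrow \cdots$, each PL-embedded in $\mathbb{R}^n$, with bonding maps close to the identity and subordinated to a sequence of covers $\F{k}$ of shrinking mesh $\mesh$. Choosing the maps carefully keeps the limit an $m$-dimensional compactum lying in $\mathbb{R}^n$, i.e.\ a Menger-type universal space. Simultaneously I would identify $N^n_m$ with the set of ``generic'' threads of the system --- those admitting approximating simplices disjoint from the rational skeleton --- and verify that the induced map $N^n_m \to M^n_m$ is a continuous injection with continuous inverse and dense image. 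The bookkeeping for injectivity, for continuity of the inverse, and for the dimension bound is organized along the tower via the refinements $\F{k}, \G{k}$ and is where most of the routine technical effort goes.

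The main obstacle is to carry out the embedding $N^n_m \hookrightarrow M^n_m$ while pinning the dimension of the compactum at $m$, and this is hard precisely because the construction takes place in low codimension. In the stable range $n \ge 2m+1$ one has general position: approximating polyhedra and maps can be perturbed to be mutually disjoint and to drop their dimension, after which the characterization theorem quoted in the introduction identifies the resulting limit. When $n < 2m+1$ --- the unstable range, and in particular codimension $2$, where $m = n-2$ --- general position is unavailable, so the polyhedra cannot be freely separated without raising dimension. Overcoming this requires a delicate dimension-preserving positioning scheme special to codimension $2$; it is exactly this step that confines the present method to codimension $2$ and leaves the conjecture open in general.
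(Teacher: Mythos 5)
This statement is a \emph{conjecture} in the paper, not a theorem: the introduction explicitly records that its status is unknown in codimension $2$ (the lowest open case being $m=2$, $n=4$), and the paper proves only the special instance $X = N^n_{n-2}$ (Theorem~\ref{thm:engelking}). So there is no proof in the paper to compare yours against, and your proposal cannot stand as a proof of the general statement.

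The concrete gap is your very first step. You embed an arbitrary $m$-dimensional $X \subseteq \mathbb{R}^n$ into $N^n_m$ by appealing to ``universality'' of $N^n_m$. That universality is a theorem only in the stable range $n \geq 2m+1$, where $N^n_m$ is homeomorphic to $\nu^m$ and the conjecture is already known, as the paper notes. In the unstable range $n < 2m+1$ --- the only range where anything remains to be proved --- the assertion that every $m$-dimensional subset of $\mathbb{R}^n$ embeds into $N^n_m$ is itself the second open conjecture quoted in the introduction. Your reduction therefore trades one open problem for another, and the remaining content of your argument (once $X$ sits inside $N^n_m$, compactify $N^n_m$ inside $\mathbb{R}^n$ and take closures) is exactly Engelking's question, which the paper answers only for codimension $2$. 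The closure-and-monotonicity bookkeeping giving $\dim cX = m$ is fine as far as it goes, but both load-bearing steps are unproven. Incidentally, even for the codimension-$2$ compactification of $N^n_{n-2}$ your proposed method (an inverse limit of PL-embedded $m$-dimensional polyhedra realizing a Menger-type compactum) differs from what the paper actually does: it approximates the identity of $N^n_{n-2}$ by a convergent sequence of self-embeddings, organized by a two-player ``game'' limit theorem and two explicit moves (``straightening'' and ``push away''), so that the limit embedding has compact closure inside $N^n_{n-2}$. The ``delicate dimension-preserving positioning scheme'' you defer to is precisely the $1$-tameness and unknotting argument there; it is genuinely the hard part, not routine bookkeeping.
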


Since $\nu^n \subset \mathbb{R}^{2n+1}$ is an universal space for $n$-dimensional separable metric spaces, Conjecture~\ref{compactification} is true for spaces of dimension $m$ such that $2m + 1 \leq n$. The problem is hard in unstable dimensions, i.e. dimensions $m$ such that $2m + 1 > n$.
If $A$ is an $m$-dimensional subset of $\mathbb{R}^n$, then we say that $A$ is of \emph{codimension} $n - m$.
Conjecture~\ref{compactification} is trivial for codimension $0$. In codimension $1$, it is a theorem of Sierpiński~\cite{engelking1978}. The status of the conjecture is not known for codimension $2$. Since for $n \leq 3$ codimension $2$ is a stable case, the lowest dimensions where the conjecture is open are $m = 2$ and $n = 4$.

The main part of conjectured characterization of unstable N\"obeling spaces is the following conjecture~\cite{chigogidze1996}.
\begin{conjecture}
  Every $m$-dimensional subset of $\mathbb{R}^n$ can be embedded into $N^n_m$.
\end{conjecture}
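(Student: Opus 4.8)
The plan is to attack the statement by deforming the inclusion. Write the complement of the N\"obeling space as $\mathbb{R}^n \setminus N^n_m = \bigcup_{i=1}^\infty L_i$, where each $L_i$ is an affine subspace of dimension $n-m-1$ obtained by fixing some $m+1$ of the coordinates at rational values; the union over all such choices is exactly the set of points with at least $m+1$ rational coordinates. Given an $m$-dimensional $A \subset \mathbb{R}^n$, the inclusion $\iota\colon A \hookrightarrow \mathbb{R}^n$ is already an embedding, so it suffices to approximate $\iota$ by an embedding $h\colon A\to\mathbb{R}^n$ whose image misses every $L_i$; then $h(A)\subseteq N^n_m$ and $h$ is the desired embedding. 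The encouraging dimension count is that $\dim A+\dim L_i = m+(n-m-1)=n-1<n$, so each individual $L_i$ can be avoided by a generic map.

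First I would isolate the one-step push-off as a lemma: given an embedding $f\colon A\to\mathbb{R}^n$, one subspace $L=L_i$, and $\varepsilon>0$, produce an embedding $g$ with $\dist(f,g)<\varepsilon$, with $g(A)\cap L=\emptyset$, with $g=f$ off a prescribed neighborhood of $f^{-1}$ of a neighborhood of $L$, and with $g$ still missing the previously treated $L_j$'s. To push off $L$ I would compose with the linear projection $\pi_L\colon\mathbb{R}^n\to\mathbb{R}^n/L\cong\mathbb{R}^{m+1}$ that collapses the direction of $L$ to a point $p$; since $\dim A=m<m+1$, a controlled perturbation of $\pi_L\circ f$ in general position misses $p$, and lifting this back gives $g$ with $g(A)\cap L=\emptyset$. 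Concretely I would realize the perturbation through a fine cover $\U{}$ of $A$ with $\mesh \U{}$ small, a subordinated partition of unity, and a generically chosen assignment of small transverse displacement vectors to the members of $\U{}$.

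Next I would run the standard inductive-convergence scheme. Enumerating $L_1,L_2,\dots$, I would build embeddings $f_0=\iota,f_1,f_2,\dots$ by applying the one-step lemma to kill $L_k$ at stage $k$, choosing $\varepsilon_k$ small enough that the images stay off $L_1,\dots,L_{k-1}$ (these are closed and were missed with room to spare), that the sequence $(f_k)$ is uniformly Cauchy, and that the limit $h=\lim_k f_k$ remains an embedding. The last point is enforced by the usual complete-sequence-of-embeddings criterion, tracking, on an exhaustion $K_1\subseteq K_2\subseteq\cdots$ of $A$ by compacta, a decreasing sequence of injectivity moduli and keeping all later perturbations below them, so that no double points are ever created and $h$ is injective with continuous inverse onto its image.

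The hard part will be exactly this last control in the unstable range $2m+1>n$. When $2m+1\le n$ embeddings form a dense $G_\delta$ in the mapping space, so the one-step push-off can be taken to be an embedding for free and the scheme is routine; the whole difficulty is that in unstable dimensions a small perturbation of an embedding need not be an embedding, and a generic perturbation is typically not even injective. My scheme has a chance only because it starts from the genuine embedding $\iota$ and makes small, locally supported modifications, but keeping the injectivity moduli from degenerating to $0$ along the infinitely many push-offs—on a noncompact space of unstable dimension—is precisely the obstruction that keeps the statement open, and it is where any honest proof must do the real work.
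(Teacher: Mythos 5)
There is a genuine gap, and in fact the statement you are trying to prove is presented in the paper as an open conjecture (due to Chigogidze), not as a theorem: the paper offers no proof of it, and only establishes a much weaker result in codimension $2$ (a compactification statement for $N^n_{n-2}$ itself, for sets that are assumed \emph{tame}). Your overall strategy --- enumerate the complement $\mathbb{R}^n \setminus N^n_m$ as a countable union of affine subspaces $L_i$ of dimension $n-m-1$, push the image off each $L_i$ in turn, and control the limit so that it remains an embedding --- is exactly the strategy the paper uses for its own Main Theorem (the Straightening and Push Away moves combined with the limit theorem for sequences of embeddings with uniformly continuous inverses). So the architecture is right; the problem is that the one step you defer is the entire content of the conjecture.

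Concretely, two things break. First, your one-step push-off lemma does not produce an embedding. Perturbing $\pi_L \circ f$ via a partition of unity and generic displacement vectors gets the image off $L$, but the resulting map $g$ is a non-ambient perturbation of $f$, and in the unstable range $2m+1>n$ a small perturbation of an embedding of an $m$-dimensional set need not be injective, nor can general position restore injectivity (double points of an $m$-dimensional set in $\mathbb{R}^n$ are generically of dimension $2m-n \geq 0$). The paper evades this only by making its push-off an \emph{ambient} homeomorphism of $\mathbb{R}^n$ (so that restricting to $A$ is automatically an embedding), and that construction in turn requires a position hypothesis on $A$ --- the tameness conditions of Lemma~\ref{lem:tame}, i.e.\ that arcs and disks in the complement of $A$ can be found and unknotted --- which an arbitrary $m$-dimensional subset of $\mathbb{R}^n$ is not known to satisfy, and which is only even formulated in the paper for codimension $2$. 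Second, even granting the one-step lemma, your convergence control via injectivity moduli on an exhaustion by compacta gives pointwise, not uniform, control; the paper's limit theorem requires each move to have a \emph{uniformly} continuous inverse, and without that the limit can fail to be an embedding. You candidly acknowledge in your last paragraph that the injectivity control is ``precisely the obstruction that keeps the statement open''; that acknowledgment is accurate, and it means the proposal is an outline of the known difficulty rather than a proof.
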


It raises the following question, which was stated by Engelking in $1978$~\cite{engelking1978}: 
\begin{quote}
Does $N^n_m$ have $m$-dimensional compactification that can be embedded into $\mathbb{R}^n$?
\end{quote}
In this paper we answer this question in affirmative in codimension $2$. We prove the following theorem.

\begin{theorem}\label{thm:engelking}
  For each $n$ the N\"obeling space $N^n_{n-2}$ has an $(n-2)$-dimensional compactification that embeds into $\mathbb{R}^n$.
\end{theorem}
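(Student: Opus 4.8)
The plan is to deduce the theorem from an embedding of $N^n_{n-2}$ into a compact $(n-2)$-dimensional subset of $\mathbb{R}^n$, namely the Menger space $M = M^n_{n-2}$ of codimension $2$. Granting such an embedding $e \colon N^n_{n-2} \hookrightarrow M$, its closure $\overline{e(N^n_{n-2})}$, taken in the compactum $M$, is compact, contains $e(N^n_{n-2})$ as a dense homeomorphic copy of $N^n_{n-2}$, and is therefore a compactification of $N^n_{n-2}$ that embeds into $\mathbb{R}^n$. Its dimension is at most $\dim M = n-2$ and at least $\dim N^n_{n-2} = n-2$, so it is exactly $(n-2)$-dimensional. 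Thus everything reduces to producing the embedding $e$, which is precisely the construction announced in the abstract.

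First I would fix a concrete model of $M$. Choose a refining tower of triangulations $T_0, T_1, \ldots$ of the cube $I^n$ with $\mesh T_k \to 0$, each $T_{k+1}$ subdividing $T_k$. Let $U_k$ be the regular neighborhood of the $(n-2)$-skeleton $T_k^{(n-2)}$, assembled from the barycentric stars $\bst \sigma$ of the $(n-2)$-simplices $\sigma$ of $T_k$, and arrange $U_{k+1} \subset U_k$. Then $M = \bigcap_k U_k$ is the Menger space of codimension $2$: it is compact, nowhere dense in $I^n$, and $(n-2)$-dimensional. The transverse geometry of $U_k$ is governed by the links of the $(n-2)$-simplices, which are $1$-dimensional; this is the structural feature that makes codimension $2$ tractable.

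The embedding $e$ I would build as a uniform limit $e = \lim_k e_k$ of maps $e_k \colon N^n_{n-2} \to U_k$. The construction is inductive: having placed the image inside $U_k$ and off the lower strata, I push it into $U_{k+1}$ by a perturbation supported in the stars $\st$ of $T_{k+1}$ and of size controlled by $\mesh T_{k+1}$. The two irrational coordinates of a point of $N^n_{n-2}$ supply two genuine transverse parameters along which the image can be routed into the $(n-2)$-skeleton neighborhoods while being kept away from the deleted top cells; keeping the image in generic position, off the strata corresponding to rational directions, guarantees that the threads converge into the generic points of $M$. Choosing the corrections to have summable diameters makes $(e_k)$ uniformly Cauchy, so $e$ exists and maps into $M$.

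The hard part will be to make $e$ an \emph{embedding} rather than merely a continuous limit: the image must be forced into the nowhere-dense set $M$ without collapsing distinct points, so that the inverse stays continuous in the limit. I would secure this by maintaining, at each stage, a uniform lower bound on the separation $\dist(e_k(x), e_k(y))$ for points $x, y$ already separated by the $k$-th cover, and by choosing the stage-$k$ perturbation small relative to that bound, so that no two separated threads can merge. The essential obstruction is exactly the transverse rerouting: resolving it amounts to lifting through the $1$-dimensional links of the $(n-2)$-skeleton, an obstruction living in $\pi_0$ and $\pi_1$ that the two irrational directions can always cancel. In higher codimension the links are higher-dimensional and the corresponding higher-homotopy obstructions cannot be resolved with the available transverse freedom, which is why Conjecture~\ref{compactification} remains open there; codimension~$2$ is precisely the regime in which the threading can be completed.
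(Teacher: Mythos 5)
Your opening reduction is exactly the one the paper uses: embed $N^n_{n-2}$ into a compact $(n-2)$-dimensional subset of $\mathbb{R}^n$, take the closure of the image, and sandwich the dimension of that closure between $\dim N^n_{n-2}$ and the dimension of the ambient compactum. That part is fine. The genuine gap is in the step you yourself identify as the whole content of the theorem: the construction of $e$. Your inductive mechanism for pushing the image from $U_k$ into $U_{k+1}$ rests on the claim that ``the two irrational coordinates of a point of $N^n_{n-2}$ supply two genuine transverse parameters.'' That is a property of the standard copy of $N^n_{n-2}$ in standard position; after the very first perturbation the image is merely some homeomorphic copy sitting in $U_1$, and nothing about rationality of coordinates survives. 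What the induction actually needs is a \emph{position} property of the image that (i) lets you move the dual $1$-skeleton of $T_{k+1}$ off the image by an ambient homeomorphism of size comparable to $\mesh T_{k+1}$, and (ii) is preserved by the moves so the induction can continue. This is precisely what the tameness conditions of Lemma~\ref{lem:tame} supply in the paper: the complement of the set contains embedded arcs with prescribed endpoints inside prescribed small cylinders, and for $n>3$ such arcs are unknotted, so a small ambient homeomorphism clears the $1$-complex. Your remark that the obstruction lives in $\pi_0$ and $\pi_1$ of the links points at the right phenomenon but is never turned into an argument; the codimension-$2$-specific input that is actually used (condition (3) of Lemma~\ref{lem:tame}) never appears, nor does the restriction $n>3$. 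Likewise your separation-maintenance criterion for the limit is the right idea --- it is what Theorem~\ref{thm:limit} formalizes --- but it requires each move to be an embedding with uniformly continuous inverse, which you would still have to verify for your skeleton-clearing perturbations.

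For comparison, the paper's bookkeeping is different from and lighter than yours: instead of a defining sequence of regular neighborhoods for the Menger space, it writes $\mathbb{R}^n \setminus N^n_{n-2}$ as a countable union of lines $l_i$ and inductively arranges $\Cl(\im \varphi_i) \cap l_i = \emptyset$, using a straightening move (clear one line, via tameness plus unknotting of arcs) followed by an explicit radial push-away embedding of $\mathbb{R}^n \setminus l_i$ that keeps the closure of the image a positive distance from $l_i$. Compactness of the closure then comes from bounded diameter, and containment in $N^n_{n-2}$ from missing every $l_i$, with Theorem~\ref{thm:limit} guaranteeing the limit is an embedding. If you want to salvage your Menger-space version, the dual $1$-skeleton plays the role of the lines, and you need the same two ingredients; as written, the step ``I push it into $U_{k+1}$'' is an assertion, not a construction.
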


\section{Preliminaries}

In this section we review some conditions for a set $A \subset \mathbb{R}^n$ to be of codimension $\geq 2$. 
Recall a necessary and sufficient condition for a subset $X \subset \mathbb{R}^n$ to be of codimension $\geq 1$.

\begin{lemma}
  Let $X \subset \mathbb{R}^n$.
  We have $\dim X \leq n - 1$ if and only if for each open non-empty $U \subset \mathbb{R}^n$ the set $U \setminus X$ is non-empty.
\end{lemma}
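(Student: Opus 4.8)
The plan is to prove the two directions separately. The statement characterizes $\dim X \leq n-1$ (codimension $\geq 1$) by the condition that $X$ fails to contain any open subset of $\mathbb{R}^n$, i.e.\ $X$ has empty interior. The key tool will be the sum theorem for covering dimension together with the fact that $\mathbb{R}^n$ itself is exactly $n$-dimensional.

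First I would prove the contrapositive of the ``only if'' direction. Suppose the condition fails, so there is a non-empty open $U \subset \mathbb{R}^n$ with $U \setminus X = \emptyset$, that is $U \subseteq X$. Then $X$ contains an open $n$-ball, and since $\dim$ is monotone on subsets and $\dim U = n$ for any non-empty open $U \subseteq \mathbb{R}^n$, we get $\dim X \geq n$, contradicting $\dim X \leq n-1$. Hence if $\dim X \leq n-1$ then every non-empty open $U$ must meet the complement of $X$.

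For the ``if'' direction I would argue by contraposition as well: assume $\dim X \geq n$ and produce a non-empty open $U$ with $U \subseteq X$. Since $X \subseteq \mathbb{R}^n$ and $\dim \mathbb{R}^n = n$, monotonicity forces $\dim X = n$. The cleanest route is to invoke the classical theorem that a subset of $\mathbb{R}^n$ has dimension $n$ if and only if it has non-empty interior in $\mathbb{R}^n$; equivalently, any subset of $\mathbb{R}^n$ with empty interior has dimension at most $n-1$. This last fact follows because $\mathbb{R}^n$ with a point (or a closed nowhere dense set) removed still carries the structure needed to see the dimension drop, and more directly from the decomposition of $X$ with empty interior into pieces that embed into hyperplanes. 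An alternative self-contained approach: if $X$ has empty interior, then $X$ is contained in a countable union of nowhere dense closed sets only in the meager case, so instead I would use that $X$ with empty interior is totally disconnected from the ``solid'' part of $\mathbb{R}^n$ and apply the separation characterization of dimension.

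The main obstacle will be the ``if'' direction, specifically justifying that empty interior forces $\dim X \leq n-1$ rather than merely $\dim X < n$ in some weaker sense. The crisp way to close this gap is to recall that for $X \subseteq \mathbb{R}^n$, having $\dim X = n$ is equivalent (by Brouwer's invariance of domain, or by the classical theorem of Menger--Urysohn on dimension of subsets of Euclidean space) to $X$ containing an open subset of $\mathbb{R}^n$; thus empty interior immediately yields $\dim X \leq n-1$. I would therefore cite this standard result and note that the lemma is essentially a restatement of it, with the two directions matching the two implications of the equivalence.
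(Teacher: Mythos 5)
The paper offers no proof of this lemma at all: it is stated as a recalled classical fact from dimension theory. Your write-up is essentially the standard justification, and its core is sound. The ``only if'' direction via monotonicity of $\dim$ together with $\dim U = n$ for non-empty open $U \subset \mathbb{R}^n$ is correct and complete. For the ``if'' direction, the argument that actually closes the proof is your citation of the classical theorem (Menger--Urysohn; see e.g.\ Engelking's \emph{Dimension Theory}, Thm.~1.8.10) that a subset of $\mathbb{R}^n$ has dimension $n$ if and only if it has non-empty interior; the lemma is indeed just a restatement of that equivalence, so this is exactly the right move and matches the paper's implicit treatment.

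One caution: the ``self-contained'' alternatives you sketch for the hard direction are not real arguments and should be deleted. A set with empty interior need not decompose into pieces that embed into hyperplanes (a dense $G_\delta$ of full measure has empty complement-interior too, and generic empty-interior sets can be topologically wild), and the phrase ``totally disconnected from the solid part of $\mathbb{R}^n$'' does not correspond to any usable separation property. Invariance of domain by itself is also not enough: it shows that an $n$-dimensional \emph{open} piece would be open in $\mathbb{R}^n$, but the genuine content of the theorem is that $\dim X = n$ forces such an open piece to exist, and that requires the partition-theoretic machinery behind the Menger--Urysohn result. So keep the citation, drop the sketches.
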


There is an analoguous condition for compact subsets of codimension $\geq 2$.
This fails for non-compact subsets: recall the Sitnikov example~\cite{engelking1995} of a $2$-dimensional subset $X$ of $\mathbb{R}^3$ with the property that for each open and connected $U \subset \mathbb{R}^3$ the set $U \setminus X$ is non-empty and connected.

\begin{lemma}
  Let $X \subset \mathbb{R}^n$ be a compact subset.
  We have $\dim X \leq n - 2$ if and only if for each open non-empty connected $U \subset \mathbb{R}^n$ the set $U \setminus X$ is non-empty and connected.
\end{lemma}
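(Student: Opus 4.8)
The plan is to prove the two implications separately, using Alexander duality together with the equality $\dim X = \dim_{\mathbb{Z}} X$ of covering and integral cohomological dimension for finite-dimensional compacta, and a general-position argument for the easy direction.

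\emph{Forward direction.} Assume $\dim X \le n-2$. Non-emptiness of $U \setminus X$ is immediate from the previous lemma, since $\dim X \le n-1$. For connectedness, choose $p, q \in U \setminus X$; as $U$ is open and connected in $\mathbb{R}^n$ it is path-connected, so there is a path $\gamma \colon I \to U$ with $\gamma(0) = p$ and $\gamma(1) = q$. Because $1 + \dim X \le n-1 < n$ and $X$ is compact, $X$ is $1$-negligible in $\mathbb{R}^n$: every map of the interval into $\mathbb{R}^n$ can be approximated arbitrarily closely, keeping the endpoints fixed, by a map whose image misses $X$. Taking the approximation fine enough that it remains inside $U$ yields a path in $U \setminus X$ joining $p$ to $q$, so $U \setminus X$ is connected.

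\emph{Backward direction.} I would argue by contraposition: assuming $\dim X \ge n-1$, I produce a connected open $U$ for which $U \setminus X$ is empty or disconnected. If $\dim X = n$ then $X$ has non-empty interior, and any open ball $U \subseteq X$ gives $U \setminus X = \emptyset$. Suppose then that $\dim X = n-1$, so that $\dim_{\mathbb{Z}} X = n-1$. Embedding $\mathbb{R}^n$ into $S^n$, Alexander duality yields $\tilde{H}_0(S^n \setminus X; \mathbb{Z}) \cong \check{H}^{n-1}(X; \mathbb{Z})$; since $S^n \setminus X$ and $\mathbb{R}^n \setminus X$ have the same number of components, a nonzero class here would already disconnect $\mathbb{R}^n \setminus X$, and I could take $U = \mathbb{R}^n$. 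In general, however, the class witnessing $\dim_{\mathbb{Z}} X = n-1$ is only a relative class $\check{H}^{n-1}(X, A; \mathbb{Z}) \neq 0$ for some closed $A \subseteq X$, so I would localize it: using continuity of \v{C}ech cohomology, find a closed ball $B$ with $\check{H}^{n-1}(X \cap B,\, X \cap \partial B; \mathbb{Z}) \neq 0$, and then apply Alexander duality inside $B$ to conclude that $\Int B \setminus X$ is disconnected; taking $U = \Int B$ finishes the contrapositive.

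The step I expect to be the main obstacle is this localization in the backward direction. The example of a closed $(n-1)$-disk $D \subset \mathbb{R}^n$ shows why it cannot be avoided: $D$ has dimension $n-1$ yet separates only those small balls centered at interior points of $D$, while balls meeting $D$ near its boundary are not separated at all. Thus the difficulty is to pin down a ball on which the top-dimensional cohomology of $X$ is genuinely detected, and to arrange the relative pair so that Alexander duality inside the ball returns an honest disconnection of $\Int B \setminus X$ rather than a class that is killed upon passing to the whole space.
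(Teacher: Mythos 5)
The paper offers no proof of this lemma: it is stated in the Preliminaries as a recalled classical fact (the forward implication is Mazurkiewicz's theorem that no region of $\mathbb{R}^n$ is cut by a set of dimension $\le n-2$; the converse for compacta goes back to Alexandroff), so there is no in-paper argument to compare yours against. On its own terms, your forward direction is correct and standard: non-emptiness follows from the codimension-one lemma, and the general-position fact that a path in $U$ with endpoints off a compactum $X$ of dimension $\le n-2$ can be approximated rel endpoints by a path in $U$ missing $X$ does give connectedness of $U \setminus X$. The case $\dim X = n$ of the converse (non-empty interior) is also fine.

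The genuine gap is exactly where you flag it, and as written it is not filled. In the case $\dim X = n-1$ you need a closed ball $B$ with $\check{H}^{n-1}(X \cap B,\, X \cap \partial B; \mathbb{Z}) \cong \check{H}^{n-1}_c(X \cap \Int B; \mathbb{Z}) \neq 0$, but continuity of \v{C}ech cohomology does not produce it: continuity passes cohomology through inverse limits of compacta, whereas the difficulty here is that the open set $V \subseteq X$ witnessing $\check{H}^{n-1}_c(V) \neq 0$ need not have the form $X \cap W$ with $W$ a ball, and by duality the existence of such a ball is essentially a restatement of the separation you are trying to prove. What actually closes the gap is a sum-theorem argument: if $\check{H}^{n-1}_c(X \cap W) = 0$ for \emph{every} open ball $W$, then Mayer--Vietoris with compact supports (the connecting terms in degree $n$ vanish because $\dim_{\mathbb{Z}} \le n-1$ is inherited by subspaces) together with the colimit property of $H^*_c$ under increasing unions of open sets forces $\check{H}^{n-1}_c(X \cap \Omega) = 0$ for every open $\Omega \subseteq \mathbb{R}^n$, i.e.\ $\dim_{\mathbb{Z}} X \le n-2$, a contradiction. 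A second, smaller repair: $X \cap \Int B$ is closed in $\Int B$ but not compact, so the final duality must be the Sitnikov--Spanier form $\check{H}^{n-1}_c(C) \cong H_1(\Int B, \Int B \setminus C) \cong \tilde{H}_0(\Int B \setminus C)$ for closed subsets of an open $n$-manifold, not classical Alexander duality for compact pairs. With these two repairs your strategy does go through.
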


Note that analoguous condition fails in codimension $3$: Antoine necklace is a $0$-dimensional compact subset of $\mathbb{R}^3$ with non-simply connected complement.



In the proof We will use the following position property of $N^n_k$.

\begin{definition}
  A space $X$ is $k$-connected if its homotopy groups of dimensions less than $k$ vanish.
\end{definition}

\begin{lemma}\label{lem:tame}
  Let $U$ be an open $k$-connected subset of $\mathbb{R}^n$.
  Let $m = n - k$ and let $N^n_m$ be a N\"obeling space of codimension $k$.
  Let $l < k$.
  \begin{enumerate}
  \item $\pi_l(U \setminus N^n_m) = 0$.
  \item If $2l + 1 < n$ and $\varphi \colon S^l \to U \setminus N^n_m$ is an embedding, then there exists
    a map $\varPhi \colon B^{l+1} \to U \setminus N^n_m$ such that $\varPhi_{|S^l} = \varphi$.
  \end{enumerate}
  In codimension $2$ we have the following additional property.
  \begin{enumerate}\addtocounter{enumi}{2}
  \item The map $\varPhi$ can be chosen to be an embedding.
  \end{enumerate}
\end{lemma}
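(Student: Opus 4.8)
The plan is to treat all three parts through one general-position mechanism, built on an explicit description of the complement. Note first that $U \setminus N^n_m$ is exactly the set of points of $U$ having at most $k-1$ irrational coordinates, so it is the union, over all ways of pinning $m+1 = n-k+1$ coordinates to rational values, of $(k-1)$-dimensional affine flats. The key structural fact I would record is this: if one cubulates $\mathbb{R}^n$ by the grid with vertices in $\delta\mathbb{Z}^n$ for a rational mesh $\delta$, then a $(k-1)$-face of this cubulation has precisely $n-(k-1)=m+1$ coordinates held constant at rational grid values, so the entire $(k-1)$-skeleton lies in $\mathbb{R}^n \setminus N^n_m$; moreover, as $\delta$ and the rational translates vary, these skeleta are dense. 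This turns ``pushing a disk off $N^n_m$'' into ``approximating it into a fine rational skeleton''.

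For (1), start with $\varphi\colon S^l \to U \setminus N^n_m$. Since $U$ is $k$-connected and $l < k$, we have $\pi_l(U)=0$, so $\varphi$ extends to $f\colon B^{l+1}\to U$. In the subcritical range $l+1 \le k-1$ I would then choose $\delta$ small enough that a cellular homotopy stays inside the open set $U$ (possible since $f(B^{l+1})$ is compact), and homotope $f$ rel $\partial$ into the $(l+1)$-skeleton of the fine rational cubulation; because $l+1 \le k-1$, this skeleton sits inside the $(k-1)$-skeleton and hence in $U \setminus N^n_m$. (One first approximates $\varphi$ itself into the skeleton, which is unobstructed as $l \le k-2$.) This produces the desired null-homotopy in the complement.

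The main obstacle is the borderline case $l+1 = k$, where cellular approximation only delivers the $k$-skeleton, which is \emph{not} contained in the complement. Here, after putting $f$ in general position, the $k$-dimensional image $f(B^{l+1})$ meets the $m=(n-k)$-dimensional set $N^n_m$ in the critical dimension $k+m-n=0$, i.e. in isolated points, and the remaining task is to excise these residual intersections by local homotopies that reroute the disk through the dense system of rational flats while keeping the image in the complement. This is exactly the level at which the connectivity of $U \setminus N^n_m$ is most fragile, and it is where I expect the genuine difficulty of the lemma to concentrate; the subcritical argument above is comparatively routine.

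Finally, (2) follows from (1): once $\pi_l(U \setminus N^n_m)=0$, the embedded $\varphi$ bounds a map $\Phi$, and the hypothesis $2l+1 < n$ lets me take $\Phi$ piecewise linear and in general position, so that the approximation into the rational flats can be carried out without creating new self-intersections. For (3), in codimension two the complement is one-dimensional — a union of rational axis-parallel lines — so the relevant instance is $l=0$: two points to be joined by an embedded arc. Since $U \setminus N^n_m$ is path-connected by (1) and is a locally path-connected separable metric space, the connecting path can be reduced to an arc, which, being an injective map into a one-dimensional space, is automatically an embedding onto its compact image; concretely the arc is produced by the routing ``run along the current rational line to a point with all coordinates rational, then switch the free coordinate,'' after which loops are pruned to secure injectivity.
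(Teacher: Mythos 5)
First, a caveat: the paper states Lemma~\ref{lem:tame} without any proof, so there is no argument of the author's to measure yours against; I can only assess the proposal on its own terms. Your structural description of the complement is correct and is the standard starting point: $\mathbb{R}^n \setminus N^n_m$ is the countable union of $(k-1)$-dimensional rational flats, and it contains the $(k-1)$-skeleta of arbitrarily fine rational cubulations. Your subcritical argument (push an $(l+1)$-disk into such a skeleton when $l+1 \le k-1$) is the right mechanism for that range, although the step ``homotope $f$ rel $\partial$ into the skeleton'' still needs an argument that the tracks of the preliminary homotopy of $\varphi$ itself stay inside the union of flats --- $\varphi(S^l)$ lies in flats that need not be faces of the cubulation you chose --- and your parenthetical remark does not address this. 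Your reduction of (3) to the case $l=0$ (a path in a Hausdorff space contains an arc) is fine and is exactly the instance the rest of the paper uses.

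The real problem is the borderline case $l = k-1$ of part (1), which you correctly single out as the locus of difficulty but leave open. It cannot be closed: that case of the statement is false. Take the $(k-1)$-sphere $\varphi$ parametrizing $\partial\bigl([0,\delta]^k\bigr) \times \{(q_{k+1},\dots,q_n)\}$ with $\delta$ and all $q_j$ rational; every point of it has at least $1+(n-k) = m+1$ rational coordinates, so it lies in $U\setminus N^n_m$ for any open $U$ containing it. Pick irrationals $\alpha_1,\dots,\alpha_k$ with $(\alpha_1,\dots,\alpha_k)\in(0,\delta)^k$. The flat $F=\{(\alpha_1,\dots,\alpha_k)\}\times\mathbb{R}^{n-k}$ is contained in $N^n_m$ (its points have at most $n-k=m$ rational coordinates), and $\varphi$ generates $\pi_{k-1}(\mathbb{R}^n\setminus F)\cong\pi_{k-1}(S^{k-1})\neq 0$. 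Since $U\setminus N^n_m\subset\mathbb{R}^n\setminus F$, the sphere $\varphi$ bounds no disk in $U\setminus N^n_m$. (For $k=2$ this says $\pi_1(U\setminus N^n_{n-2})\neq 0$ for every nonempty open $U$; likewise an embedded $B^2$ as in (2)--(3) cannot exist inside a one-dimensional union of lines.) So no amount of rerouting through rational flats will excise the residual intersection points: the obstruction is homological, not positional. The lemma is salvageable only for $l\le k-2$ --- which is all the paper ever uses, since the Straightening and Push Away moves invoke only the $l=0$ instance in codimension $2$ --- and any correct proof must build that restriction in from the start rather than hope to handle $l=k-1$ by general position.
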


\begin{definition}
  We say that a set $A \subset \mathbb{R}^n$ is \emph{$k$-tame} if it satisfies conditions (1) , (2) and (3) of lemma~\ref{lem:tame}.
\end{definition}

\section{A limit theorem}

We begin by stating a result that states sufficient conditions for a limit of a sequence of embeddings into $\mathbb{R}^n$ to be an embedding into $\mathbb{R}^n$. The key idea is to require that the inverses are uniformly continuous.

\subsection{The Game}
Let $X$ be a metric space and let $Y$ be a complete space.
Let $f_0 \colon X \to Y$ be an embedding such that $f_0^{-1}$ is uniformly continuous.
Let $Y_0 = f_0(X)$.
Consider the following infinite two-player game.
Player I starts and the players alternate moves under the following rules for move number $k \geq 1$.
\begin{enumerate}
\item Player I moves by selecting $\varepsilon_k > 0$.
\item Player II moves by selecting $g_k \colon Y_{k-1} \to Y$ such that 
\begin{enumerate}
  \item $g_k$ is an embedding that is $\varepsilon_k$-close to the identity
  \item $g_k^{-1}$ is uniformly continuous. 
\end{enumerate}
  We let $f_k = g_k \circ f_{k-1}$ and $Y_{k} = g_k(Y_{k-1}) = f_k(X)$.
\end{enumerate}
Player I wins if $\lim f_k \colon X \to Y$ exists and is an embedding of $X$ into $Y$.

\begin{theorem}\label{thm:limit}
  Player I has a winning strategy.
\end{theorem}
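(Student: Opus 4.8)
The plan is to have Player~I win by forcing the sequence $(f_k)$ to converge uniformly to a map $f$ whose inverse is uniformly continuous; a continuous injective map with uniformly continuous inverse is an embedding, so this suffices. I would first record the one structural fact that makes the game winnable: each inverse is automatically uniformly continuous. Indeed, from $f_k = g_k \circ f_{k-1}$ we get $f_k^{-1} = f_{k-1}^{-1} \circ g_k^{-1}$, so by induction $f_k^{-1} = f_0^{-1} \circ g_1^{-1} \circ \cdots \circ g_k^{-1}$ is a finite composition of uniformly continuous maps (rule~(2b) together with the hypothesis on $f_0^{-1}$), hence uniformly continuous. Fix for each $k$ a nondecreasing modulus $\beta_k \colon (0,\infty) \to (0,\infty)$ with the property that $d(u,v) < \beta_k(\eta)$ implies $d(f_k^{-1}(u), f_k^{-1}(v)) < \eta$ for $u,v \in Y_k$. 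The crucial point about the information structure of the game is that $\beta_{k-1}$ is already determined once Player~II has made move $k-1$, so Player~I knows $\beta_{k-1}$ at the moment she must choose $\varepsilon_k$.

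Player~I's strategy would be the following. Writing $\eta_l = 2^{-l}$, at move $k$ I would set $\varepsilon_k = 2^{-k} c_k$, where $c_k = \min\{1, \beta_0(\eta_1), \beta_1(\eta_2), \ldots, \beta_{k-1}(\eta_k)\}$; this is legitimate since every $\beta_j$ with $j \le k-1$ is known by move $k$. Summability $\sum_k \varepsilon_k \le \sum_k 2^{-k} < \infty$ is immediate, and since $g_k$ is $\varepsilon_k$-close to the identity we have $d(f_k(x), f_{k-1}(x)) \le \varepsilon_k$ for all $x$; thus $(f_k)$ is uniformly Cauchy, and as $Y$ is complete it converges uniformly to a continuous $f \colon X \to Y$. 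Setting $\delta_m = \sum_{j>m} \varepsilon_j$ I would record the two estimates $d(f(x), f_m(x)) \le \delta_m$ for all $x$ and, using that $(c_j)$ is nonincreasing, $\delta_m \le c_{m+1} 2^{-m}$.

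It then remains to show $f^{-1}$ is uniformly continuous. For any $m$ and any $x, x' \in X$, inserting the points $f_m(x), f_m(x') \in Y_m$ and applying $\beta_m$ gives the telescoping inequality: if $d(f(x), f(x')) < \beta_m(\eta) - 2\delta_m$, then $d(f_m(x), f_m(x')) \le 2\delta_m + d(f(x),f(x')) < \beta_m(\eta)$, whence $d(x, x') = d(f_m^{-1}(f_m(x)), f_m^{-1}(f_m(x'))) < \eta$. To make this bite for a prescribed target $\eta_i$, take $m = \max\{i,2\}$; then $c_{m+1} \le \beta_m(\eta_{m+1}) \le \beta_m(\eta_i)$ (the last step by monotonicity, since $\eta_{m+1} \le \eta_i$), so $2\delta_m \le 2^{-m+1} \beta_m(\eta_i) \le \tfrac12 \beta_m(\eta_i)$ and $\rho_i := \beta_m(\eta_i) - 2\delta_m > 0$. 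Thus $d(f(x), f(x')) < \rho_i$ forces $d(x,x') < \eta_i$. Letting $i \to \infty$ shows $f^{-1}$ is uniformly continuous; in particular $f$ is injective, and therefore an embedding, so Player~I wins.

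I expect the only real obstacle to be the timing built into the game: the moduli $\beta_m$ are revealed only as play proceeds, and Player~II could in principle make them degenerate faster than any fixed rate, so Player~I cannot commit to a schedule of $\varepsilon_k$ in advance. The resolution is exactly the device above --- folding the already-revealed values $\beta_0, \ldots, \beta_{k-1}$ into the single number $c_k$ --- which lets Player~I keep the tail $\delta_m$ small relative to $\beta_m$ without predicting Player~II's future moves. Everything else (summability, the uniform Cauchy estimate, and the composition argument for uniform continuity of the inverses) is routine.
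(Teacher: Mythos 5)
Your proof is correct and follows essentially the same strategy as the paper: at move $k$ Player~I exploits the just-revealed modulus of uniform continuity of $f_{k-1}^{-1}$ at scale $2^{-k}$ and scales it down by $2^{-k}$, then shows the limit cannot collapse points that are $2^{-k}$-separated in $X$. The only differences are cosmetic bookkeeping --- you monotonize the constants and use an additive tail estimate $2\delta_m$ where the paper uses the multiplicative product $\prod_{m}(1-2^{-m})$ --- so no further comment is needed.
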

\begin{proof}
  Fix $k \geq 1$.
  By the rules of the game, $f_{k-1}$ is an embedding such that $f_{k-1}^{-1} \colon X_{k-1} \to X$ 
    is uniformly continuous.
  Let $\delta_{k} > 0$ such that if $y, y' \in X_{k-1}$ and $d_Y(y,y') < \delta_k$, then $d_X(f_{k-1}^{-1}(y),
    f_{k-1}^{-1}(y') < \frac 1{2^k}$.
  We let $\varepsilon_k = \frac{1}{2^k} \min \{ 1, \delta_k \}$.
  
  Since $Y$ is complete and $f_k$ is a Cauchy sequence, $f = \lim f_k$ exists and is continuous.
  
  Let $x, x' \in X$ and let $k$ such that $d_X(x,x') > \frac 1{2^k}$.
  Let $C = \prod_{m \geq 1} (1 - \frac{1}{2^m})$. We have $C > 0$. We will prove that
  \[
    d_Y(f(x), f(x')) \geq C \cdot \delta_k.
  \]
  This inequality implies both that $f$ is one-to-one and that $f^{-1}$ is (uniformly) continuous.
  
  By the choice of $\delta_k$, we have $d_Y(f_{k-1}(x), f_{k-1}(x')) \geq \delta_k$.
  Since $g_k$ is $\frac{\delta_k}{2^k}$-close to the identity, 
    \[ d_Y(f_k(x), f_k(x')) \geq d_Y(f_{k-1}(x), f_{k-1}(x')) 
      - \frac{2 \delta_k}{2^k} \geq (1 - \frac{1}{2^{k-1}}) d_Y(f_{k-1}(x), f_{k-1}(x')). \]
   Therefore 
   \[ d_Y(f(x), f(x')) \geq d_Y(f_{k-1}(x), f_{k-1}(x')) \cdot \prod_{m \geq k-1} (1 - \frac{1}{2^m}) 
     \geq \delta_k \cdot C. \]
\end{proof}

\subsection{The Moves} In this section we describe two constructions that we will use to play The Game as Player II. Since Player I wins the game, in the next section we'll use these moves to construct an embedding that proves the Main Theorem of the paper.

\subsubsection{\emph{Straightening} Move}


\begin{construction}
   Let $A$ be an $1$-tame subset of $\mathbb{R}^n$ of finite diameter.
   Assume that $n > 3$.
   Let $L \subset \mathbb{R}^n$ $1$-dimensional hyperplane (a line).
   Let $\varepsilon > 0$.
   We construct a homeomorphism $\varphi \colon \mathbb{R}^n \to \mathbb{R}^n$ such that
   \begin{enumerate}
   \item $\varphi$ is the identity on a complement of a compact subset of $\mathbb{R}^n$.
   \item $\varphi$ is $\varepsilon$-close to the identity.
   \item $\im \varphi \cap L = \emptyset$.
   \end{enumerate}
\end{construction}
\begin{proof}
  Triangulate $L$ into segments and two half-lines in such a way that $0$-skeleton of $L$ misses $A$.
  Let $\tau(L)$ denote the triangulation.
  Because diameter of $A$ is finite, we may assume that both half lines are disjoint from the closure of $A$.
  Also we may assume that segments in $\tau(K)$ have lengths smaller than $1/2 \varepsilon$.
  For each segment $S \in \tau(L)$ let $C_S$ denote a cylindrical neighborhood of $\Int S$ in $\mathbb{R}^n$ such that $\diam C_S < \varepsilon$ and $S$ connects centers of opposite bases of cylinder $C_S$.
  Let $S'$ denote an embedded arc in $C_S$ that connects centers of opposite bases of $C_S$ and that is disjoint from $A$.
  It exists, because $A$ is $1$-tame.
  Since $n > 3$ we may unknot $S'$ in $C_S$ by a homeomorphism $h \colon C_S \to C_S$ that is the identity on $\partial C_S$ nad that maps $S'$ to onto $S$.
  We define $\varphi \colon \mathbb{R}^n \to \mathbb{R}^n$ by a formula $\varphi_{|C_S} = h_S$ on each $C_S$ and let it be the identity on the complement.
  Because diameter of each $C_S$ is smaller than $\varepsilon$, $\varphi$ is $\varepsilon$-close to the identity.
  Because support of $\varphi$ is compact, $\varphi^{-1}$ is uniformly continuous.
  By the assumption that $2m + 1 \leq n$ we can unknot the image of $\varphi$.
  We are done.
\end{proof}

\subsubsection{\emph{Push Away} Move}

\begin{construction}
  Let $L$ be an $m$-dimensional hyperplane in $\mathbb{R}^n$.
  Let $\varepsilon > 0$.
  We construct an embedding $\psi \colon \mathbb{R}^n \setminus L \to \mathbb{R}^n \setminus L$ such that
  \begin{enumerate}
  \item $\psi^{-1}$ is uniformly continuous.
  \item $\psi$ is $\varepsilon$-close to the identity.
  \item $\Cl (\im \psi) \cap L = \emptyset$.
  \end{enumerate}
\end{construction}
\begin{proof}
  Let $\mathbb{R}^n = L \times N$, where $N$ is $(n-m)$-dimensional hyperplane that is normal to $L$.
  Define $\psi \colon L \times (N \setminus \{ 0 \}) \to L \times N$ by the formula
  \[
    \psi(l, n) = (l, \xi(||n||) \cdot n),
  \]
  where $\xi \colon (0, \infty) \to (0, \infty)$ is given by the formula
  \[
    \xi(r) = \left\{
    \begin{array}{ll}
      \frac 12 (\varepsilon + r) & r \in (0, \varepsilon), \\
      r & r \in [ \varepsilon, \infty).
    \end{array}
    \right.
  \]
  Since $\xi^{-1}$ is $2$-Lipschitz, $\psi^{-1}$ is uniformly continuous.
\end{proof}

\section{Main Theorem}

Using the tools developed in previous sections we prove the Main Theorem of the paper.
\begin{theorem}
  \label{thm:embedding}
  If $A$ is a $2$-tame subset of $\mathbb{R}^n$ of codimension $2$ and $\diam A < \infty$, then the identity on $A$ can be arbitrarily closely approximated by embeddings into compact subsets of $N^n_{n-2}$.
\end{theorem}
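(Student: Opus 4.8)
The plan is to run The Game of Theorem~\ref{thm:limit} with $X = A$, $Y = \mathbb{R}^n$ and $f_0$ the inclusion (an isometric embedding, so $f_0^{-1}$ is $1$-Lipschitz), and to play as Player~II so that the resulting embedding pushes $A$ off the complement of $N^n_{n-2}$. First I would dispose of the stable range: if $n \le 3$ then $n-2 \le 1$ and $2(n-2)+1 \le n$, so $N^n_{n-2}$ is a standard N\"obeling space and the statement follows from universality; hence I assume $n \ge 4$, which is exactly the hypothesis $n > 3$ needed by the \emph{Straightening} move. The key geometric observation is that $\mathbb{R}^n \setminus N^n_{n-2}$ is the set of points with at least $n-1$ rational coordinates, which is precisely the countable union $\bigcup_j L_j$ of the axis-parallel lines $L_j$ obtained by fixing $n-1$ coordinates to rational values. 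Thus it suffices to produce an embedding $f$, close to $f_0$, with $\Cl f(A)$ disjoint from every $L_j$; since $\diam A < \infty$ and $f$ is close to $f_0$, $\Cl f(A)$ is automatically compact.

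At stage $k$, after Player~I selects $\varepsilon_k$, I would process the single line $L_k$ by a two-part move. First apply the \emph{Straightening} construction to the $1$-tame, bounded set $Y_{k-1}$ with line $L_k$ and parameter $\varepsilon_k/2$, obtaining an ambient homeomorphism $\varphi_k$ of $\mathbb{R}^n$, supported on a compact set, $\tfrac{\varepsilon_k}{2}$-close to the identity, with $\varphi_k(Y_{k-1}) \cap L_k = \emptyset$. Then apply the \emph{Push~Away} construction to $L_k$ with parameter $\varepsilon_k/2$, obtaining an embedding $\psi_k \colon \mathbb{R}^n \setminus L_k \to \mathbb{R}^n \setminus L_k$ with uniformly continuous inverse, $\tfrac{\varepsilon_k}{2}$-close to the identity, whose image lies at distance $\ge \varepsilon_k/4$ from $L_k$. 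Setting $g_k = \psi_k \circ (\varphi_k|_{Y_{k-1}})$ gives an embedding of $Y_{k-1}$ that is $\varepsilon_k$-close to the identity and whose inverse is uniformly continuous (a composition of uniformly continuous inverses, using that a compactly supported homeomorphism has uniformly continuous inverse). This is a legal Player~II move, and it records a definite gap $d_k := \varepsilon_k/4 > 0$ with $\dist(Y_k, L_k) \ge d_k$.

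To make the gaps permanent I would let Player~I use a sharpened winning strategy: at stage $k$ choose $\varepsilon_k$ positive and small enough to satisfy simultaneously the bound $\varepsilon_k \le 2^{-k}\min\{1,\delta_k\}$ from the proof of Theorem~\ref{thm:limit}, the decay $\varepsilon_k \le \tfrac18 \varepsilon_{k-1}$, and $\varepsilon_k \le 2^{-k}\delta$ for the prescribed approximation parameter $\delta$. Decreasing $\varepsilon_k$ only strengthens the inequalities used in Theorem~\ref{thm:limit}, so Player~I still wins and $f = \lim f_k$ is an embedding with $\dist(f(a),a) \le \sum_k \varepsilon_k < \delta$. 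The decay gives $\sum_{j>k}\varepsilon_j \le \tfrac17\varepsilon_k < d_k$ for every $k$; since $\|f(a) - f_k(a)\| \le \sum_{j>k}\varepsilon_j$, the image $f(A)$ stays at distance $\ge d_k - \sum_{j>k}\varepsilon_j > 0$ from each $L_k$, and the same lower bound passes to $\Cl f(A)$. Hence $\Cl f(A)$ is a compact subset of $N^n_{n-2}$ containing $f(A)$, and $f$ is the desired approximation. The same decay, applied by telescoping, also shows that each $g_k$ keeps the image off the previously cleared lines $L_i$, $i<k$, so no bookkeeping beyond the $\varepsilon$-estimates is needed there.

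The step I expect to be the main obstacle is the hypothesis that lets the construction iterate: the \emph{Straightening} move at stage $k$ requires $Y_{k-1}$ to be $1$-tame, so I must show that both moves preserve tameness. For the straightening move this is immediate, since $\varphi_k$ is a homeomorphism of $\mathbb{R}^n$ and tameness is phrased in terms of the homotopy of complements $U \setminus Y$ with $U$ open and $k$-connected, a property invariant under ambient homeomorphisms. The difficulty is the \emph{Push~Away} move, which is \emph{not} ambient: $\psi_k$ carries $\mathbb{R}^n \setminus L_k$ onto the complement of a solid tube $T$ around $L_k$, so it genuinely alters the position of $Y$ near $L_k$ (recall that $\Cl Y_{k-1}$ may meet $L_k$, which is exactly why an ambient homeomorphism cannot create the gap). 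To show $\psi_k(W)$ is again $2$-tame, hence $1$-tame, for $2$-tame $W$ disjoint from $L_k$, I would analyze $U \setminus \psi_k(W)$ by a van~Kampen and Mayer--Vietoris argument, splitting $U$ along $\partial T$ into the piece inside the empty tube and the piece outside, on which $\psi_k^{-1}$ is a homeomorphism carrying the question back to the tameness of $W$. Here the codimension-$2$, $n \ge 4$ hypothesis enters decisively: the complement of a solid tube around a line deformation retracts onto $S^{n-2}$, which is $1$-connected, so the gluing region contributes no new $\pi_0$ or $\pi_1$, and condition~(3) of Lemma~\ref{lem:tame} supplies the embedded disks needed to upgrade the fillings across $\partial T$. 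Carrying out this homotopy-theoretic analysis rigorously is the technical heart of the proof.
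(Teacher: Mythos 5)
Your proposal follows essentially the same route as the paper: enumerate the lines that make up $\mathbb{R}^n \setminus N^n_{n-2}$, clear one line per stage by a \emph{Straightening} move followed by a \emph{Push Away} move, and invoke Theorem~\ref{thm:limit} with rapidly decaying $\varepsilon_k$ so that the gap created at stage $k$ survives in the limit and $\Cl f(A)$ is a compact subset of $N^n_{n-2}$. The one place you go beyond the paper --- checking that the two moves preserve the tameness hypothesis needed to re-apply the Straightening construction at the next stage --- is a point the paper's own proof leaves implicit, and your flagging of it is apt.
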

\begin{proof}
  Let $l_1, l_2, \ldots$ be a sequence of $1$-dimensional hyperplanes such that $\bigcup_{i=1}^\infty l_i = \mathbb{R}^n \setminus N^n_{n-2}$. Fix $\varepsilon > 0$.
  
  We construct a sequence of embeddings $\varphi_i \colon A \to \mathbb{R}^n$ such that $\varphi_{i}$ is $\varepsilon_{i}$-close to $\varphi_{i-1}$ and $\varphi_{i-1} \circ \varphi_i^{-1}$ is uniformly continuous. The $\varepsilon_i$'s are selected according to the winning strategy of Player I from Theorem~\ref{thm:limit}. This guarantees that the limit $\varphi = \lim_{i \to \infty} \varphi_i$ exists and is an embedding of $A$ into $\mathbb{R}^n$. Additionally, we select $\varepsilon_i$'s so that $\sum_{i = 1}^\infty \varepsilon_i < \varepsilon$. Then $\varphi$ is $\varepsilon$-close to the identity on $A$.
  
  An embedding $\varphi_i$ is constructed so that
  \[
    \Cl (\im \varphi_i) \cap l_i = \emptyset.
  \]
  
  The construction is done in two steps. In the first step we use the Straightening Move to perturb $\varphi_{i-1}$ to an embedding $\varphi'_{i-i}$ such that $\im \varphi_i \cap l_i = \emptyset$. In the second step we compose $\varphi'_{i-1}$ with a Push Away move to construct the embedding $\varphi_i$ from $\varphi'_{i-1}$.

  Observe that $\im \varphi_i$ has finite diameter, so $\Cl \im \varphi_i$ is compact. Hence $\dist(\im \varphi_i, l_i) > 0$. Hence if subseqent $\varepsilon_i$'s are sufficiently close to each other, then the limit $\varphi$ satisfies $\dist(\im \varphi, l_i) > 0$. Then $\Cl \im \varphi$ is compact and disjoint from the complement of $N^n_{n-2}$ and therefore it is an embedding of $A$ into a compact subset of $N^n_{n-2}$.
\end{proof}

By lemma~\ref{lem:tame}, an unstable N\"obeling space $N^n_{n-2}$ is an $1$-tame $2$-codimensional subset of $\mathbb{R}^n$. It is homeomorphic to $N^n_2 \cap (0, 1)^n$, which is of finite diameter. By theorem~\ref{thm:embedding}, we can remebed it into a compact subset of $N^n_{n-2}$. The closure of the image of this embedding will be $(n-2)$-dimensional and compact. Therefore proof of Theorem~\ref{thm:engelking} is completed.

\bibliography{references}

\end{document}